\newtheorem{thm}{Theorem}[section]
\newtheorem{prop}[thm]{Proposition}
\theoremstyle{remark}
\newtheorem{rem}[thm]{Remark}
\newtheorem*{rem*}{Remark}
\theoremstyle{definition}
\newtheorem{ex}[thm]{Example}
\numberwithin{equation}{section}
\newcommand{\Rz}{\mathbb{R}}
\begin{document}
\title{Erratum to: Medial axis and singularities}

\author{Lev Birbrair \& Maciej P. Denkowski}\address{Universidade Federal do Cear\'a, Fortaleza, Brazil \& Jagiellonian University, Faculty of Mathematics and Computer Science, Institute of Mathematics, \L ojasiewicza 6, 30-348 Krak\'ow, Poland}\email{lev.birbrair@gmail.com \& maciej.denkowski@uj.edu.pl}\date{May 8th 2017}
\keywords{Medial axis, skeleton, central set, o-minimal geometry, singularities}
\subjclass{32B20, 54F99}

\begin{abstract}
We correct one erroneous statement made in our recent paper {\it Medial axis and singularities}.
\end{abstract}

\maketitle
\section{Introduction}
In this note `definable' means `definable in an o-minimal structure over the field of real numbers' that in addition is required to be polynomially bounded. 

Let $d(x,X)$ denote the Euclidean distance of $x\in{\Rz}^n$, to $X\subset{\Rz}^n$. We recall that given a closed, nonempty, proper subset $X\subset{\Rz}^n$, we consider its {\it medial axis} as the set defined by
$$
M_X:=\{x\in\Omega\mid \#m(x)>1\}
$$
where $$m(x)=\{y\in X\mid d(x,X)=||x-y||\}$$ is the {\it set of closest points} to $x$ in $X$. A closely related concept is that of the {\it central set} $C_X$ of $X$ that  consists of the centres of maximal balls contained in $\Omega={\Rz}^n\setminus X$ \footnote{An open ball $B\subset\Omega$ is maximal, if for any other ball $B'$ such that $B\subset B'\subset\Omega$, there is $B=B'$.}. It is known that $M_X\subset C_X\subset\overline{M_X}$ (cf. \cite[Theorem 2.25]{BD}).

\medskip
During the preparation of the revised version of our recent paper \cite{BD} we decided, as an afterthought, to include in it the following observation:
\begin{prop}[\cite{BD} Proposition 3.24]
Assume that $X\subset{\Rz}^2$ is a definable curve such that $0\in X$ and the germ $(X\setminus\{0\},0)$ is connected. Then $0\notin \overline{M_X}$.
\end{prop}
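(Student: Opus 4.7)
The plan is to exploit the connectedness of the germ to reduce $X$ near $0$ to a definable graph, apply curve selection to any hypothetical approach of $M_X$ to $0$, and then use the first-order optimality conditions for two closest points to force an asymptotic contradiction. Since $X$ is a definable curve in $\Rz^2$ with a single branch at $0$, o-minimality (with polynomial boundedness in reserve for Puiseux-style asymptotics) gives a well-defined tangent direction at the origin. After rotating so that this tangent is horizontal I would write $X$ locally as the graph $\{(s,\phi(s)):s\in[0,\delta)\}$ of a definable function with $\phi(0)=0$ and $\phi'(s)\to 0$ as $s\to 0^+$, admitting an expansion $\phi(s)=cs^\alpha+o(s^\alpha)$ for some $\alpha>1$, unless $\phi\equiv 0$.

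The flat case $\phi\equiv 0$ would be disposed of directly: $X$ is locally a (half-)segment ending at $0$, and a routine check shows $M_X=\emptyset$ in a small neighborhood of $0$. In the non-flat case I would suppose $0\in\overline{M_X}$ and apply the o-minimal curve selection lemma together with definable choice to obtain definable arcs $x(t)=(a(t),b(t))\in M_X$ and $y_i(t)=(s_i(t),\phi(s_i(t)))\in m(x(t))$ for $i=1,2$, with $s_1(t)<s_2(t)$ and $a(t),b(t),s_i(t)\to 0$ as $t\to 0^+$. The orthogonality of $x(t)-y_i(t)$ with the tangent to $X$ at $y_i(t)$ reads $a(t)-s_i(t)+\phi'(s_i(t))(b(t)-\phi(s_i(t)))=0$, and the equal-distance condition reduces to $|b(t)-\phi(s_1(t))|\sqrt{1+\phi'(s_1(t))^2}=|b(t)-\phi(s_2(t))|\sqrt{1+\phi'(s_2(t))^2}$. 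Substituting the Puiseux expansion of $\phi$ into these identities, and using a definable mean-value theorem to express differences $\phi(s_2)-\phi(s_1)$ and $\phi'(s_2)-\phi'(s_1)$ through intermediate derivatives, I would try to extract an inconsistency with $a(t),b(t)\to 0$.

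The step I expect to be the main obstacle, and the one I would watch most carefully, is the case in which one of the two closest points coincides with the endpoint $y_1(t)\equiv 0$. There the tangency condition at $y_1$ is replaced by the one-sided boundary inequality $a(t)\le 0$, and the equal-distance condition merely places $x(t)$ on the perpendicular bisector of $0$ and $y_2(t)$; since for $y_2(t)\to 0$ this bisector can itself pass arbitrarily close to the origin, nothing a priori prevents a medial-axis arc along it. Excluding this endpoint-bitangent configuration requires a delicate balance between the Puiseux exponent $\alpha$ of $\phi$ and the geometry of circles tangent to $X$ through $0$, and the answer turns out to be genuinely sensitive to $\alpha$. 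This is the step where I would expect the proposition to stand or fall, and it strikes me as the natural place for a gap to creep into the original argument.
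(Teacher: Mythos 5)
You cannot complete this proof, because the proposition as stated is false --- the paper you are being compared against is precisely an erratum retracting it. The correct statement (Proposition \ref{correction}) is an equivalence: $0\in\overline{M_X}$ if and only if $X$ is \emph{superquadratic} at $0$, i.e.\ in your notation $\phi\not\equiv 0$ and $1<\alpha<2$. A concrete counterexample to what you are trying to prove is $X=\{(s,s^{3/2}):s\ge 0\}$. There the radius of curvature at $(s,s^{3/2})$ is $\rho(s)=(1+\phi'(s)^2)^{3/2}/\phi''(s)\sim\tfrac{4}{3}\sqrt{s}\to 0$, so along the inward normal at $(s,s^{3/2})$ the nearest-point map $m$ must cease to be single-valued (equivalently, must become discontinuous) within distance $\rho(s)$ of the curve; such points belong to $\overline{M_X}$ and accumulate at the origin. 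This is exactly the mechanism formalized in \cite[Lemma 3.17, Theorem 4.35]{BD} via the reaching radius. Consequently the central step of your plan --- substituting the Puiseux expansion into the orthogonality and equal-distance identities and ``extracting an inconsistency'' --- cannot succeed for $1<\alpha<2$: the system genuinely admits solutions $s_1(t)<s_2(t)\to 0$ with $x(t)\to 0$. Your instinct that the answer is ``genuinely sensitive to $\alpha$'' was exactly right; the resolution is that the proposition fails for $\alpha<2$ rather than requiring a finer estimate.

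Two further remarks. First, the configuration you flagged as the dangerous one --- a bitangent circle through the endpoint $0$ --- is in fact the harmless one in the superquadratic regime: \cite[Lemma 3.17]{BD} shows that for $\alpha<2$ the origin is \emph{not} a nearest point of any $(0,y)$ with $y>0$ small, so the endpoint never participates; the failure comes from pairs of interior points driven by the curvature blow-up. Second, for the part of the statement that is true ($\phi\equiv 0$ or $\alpha\ge 2$) your computational strategy could work but is more laborious than necessary: extend $\phi$ by $0$ to $(-\delta,0]$; since $\phi''$ stays bounded when $\alpha\ge 2$, the extended graph is a $\mathcal{C}^{1,1}$ curve through the origin, hence has positive reach by the Nash-type lemma \cite[Lemma 1.1]{D}, and nearest points in $X$ for points near $0$ coincide with nearest points in the extended curve, so $m$ is single-valued near $0$ and $0\notin\overline{M_X}$. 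If you reshape your write-up around the dichotomy $\alpha<2$ versus ($\phi\equiv 0$ or $\alpha\ge 2$), you will be proving the corrected Proposition \ref{correction} rather than the retracted one.
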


Unfortunately, only shortly after the paper had been published we realized that there is a flaw in the proof and this statement is altogether erroneous. This may seem surprising at first sight. Below we give the correct version of the result (Proposition \ref{correction}) preceded by a short introductory preparation. The corrected Proposition has also some mild impact on two other results from \cite{BD} --- see Remark \ref{rk} and Theorem \ref{3.27} hereafter.

\section{Definable plane single branches}

If  $X\subset{\Rz}^2$ is a definable curve such that $0\in X$ and the germ $(X\setminus\{0\},0)$ is connected, i.e. $X$ has a single branch ending at the origin, then the tangent cone $C_0(X)$ is a half-line that we can identify with ${\Rz}_+\times\{0\}^{n-1}\subset{\Rz}^n$ in properly chosen coordinates and $X$ is near zero the graph of a definable $\mathcal{C}^1$ function $f\colon [0,\varepsilon)\to{\Rz}$ with $f(0)=0$ and, clearly, $f'(0)=0$. Then for $0<t\ll 1$, we can write $f(t)=at^\alpha+o(t^\alpha)$ with $a\neq 0, \alpha\geq 1$, provided $f\not\equiv 0$. We say that $X$ is {\it superquadratic} at zero iff $f\not\equiv 0$ and $\alpha<2$ (cf. \cite[Section 3.3]{BD}).

The definability of $f$ allows us also to assume that $f$ has constant convexity on $[0,\varepsilon)$ and is $\mathcal{C}^2$ on $(0,\varepsilon)$.

We shall be using the {\it reaching radius} from \cite[Definition 4.24]{BD} (Section 4 in \cite{BD} is independent from the previous sections).

The correct version of \cite[Proposition 3.24]{BD} reads:
\begin{prop}[\cite{BD} Proposition 3.24 -- correct version]\label{correction}
Assume that $X\subset{\Rz}^2$ is a definable curve such that $0\in X$ and the germ $(X\setminus\{0\},0)$ is connected. Then $0\in \overline{M_X}$ if and only if $X$ is superquadratic at zero.
\end{prop}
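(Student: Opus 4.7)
The plan is to exploit the reduction explained in Section~2: in suitable coordinates $X$ coincides near $0$ with the graph of a definable function $f\colon[0,\varepsilon)\to\Rz$ of class $\mathcal{C}^1$, of constant convexity, and $\mathcal{C}^2$ on $(0,\varepsilon)$, with $f(0)=f'(0)=0$. If $f\equiv 0$ then $X$ is locally a half-line, a direct check gives $M_X\cap B(0,\delta)=\emptyset$ for some $\delta>0$, and $X$ is not superquadratic, so both sides of the equivalence fail. Otherwise $f(t)=at^\alpha+o(t^\alpha)$ with $a\neq 0$ and $\alpha>1$; after a reflection if necessary, $a>0$, so $f$ is strictly convex and the curvature $\kappa(t)=f''(t)(1+f'(t)^2)^{-3/2}$ at $\gamma(t):=(t,f(t))$ satisfies $\kappa(t)\sim a\alpha(\alpha-1)t^{\alpha-2}$ as $t\to 0^+$.

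For the implication ``$X$ not superquadratic $\Rightarrow 0\notin\overline{M_X}$'' one has $\alpha\geq 2$, so $\kappa$ stays bounded on $(0,\varepsilon)$ (tending to $2a$ if $\alpha=2$ and to $0$ if $\alpha>2$) and the radius of curvature $R(t)=1/\kappa(t)$ is bounded below by a positive constant on some $(0,\varepsilon')$. A ball of radius slightly less than $R(t)$, tangent to $X$ from the concave side at $\gamma(t)$, avoids $X$ locally; letting $t\to 0^+$ and invoking \cite[Definition~4.24]{BD} yields the reaching-radius bound $r(0;X)\geq\liminf_{t\to 0^+}R(t)>0$, whence $0\notin\overline{M_X}$.

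For the converse, assume $1<\alpha<2$, so $R(t)\to 0$. For each small $t>0$ let $q_t$ be the unique point on the normal to $X$ at $\gamma(t)$ that is equidistant from $0$ and $\gamma(t)$; solving the resulting system gives
\[
q_t=\left(\frac{\alpha-2}{2(\alpha-1)}\,t,\ \frac{1}{2a(\alpha-1)}\,t^{2-\alpha}\right)(1+o(1)),
\]
so $q_t\to 0$ as $t\to 0^+$ (both coordinates vanish because $2-\alpha>0$), and moreover $|q_t-\gamma(t)|/R(t)\to\alpha/2<1$.

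The main technical obstacle is to verify that $q_t\in M_X$, i.e.\ that no curve point $\gamma(r)$ is strictly closer to $q_t$ than the common distance $|q_t|=|q_t-\gamma(t)|$. Setting $D^2(r):=|q_t-\gamma(r)|^2$, the boundary $r=0$ is a local minimum because $(D^2)'(0)=-2q_{t,x}>0$ (using $q_{t,x}<0$), and $r=t$ is a strict local minimum because $|q_t-\gamma(t)|<R(t)$. Substituting $r=\rho t$ and using $f(t)=at^\alpha(1+o(1))$, the equation $(D^2)'(r)=0$ reduces at leading order in $t$ to the fixed equation $2(\alpha-1)\rho+(2-\alpha)=\alpha\rho^{\alpha-1}$ in $\rho$, whose positive roots are exactly $\rho=1$ and a single intermediate $\rho^*\in(0,1)$; a perturbation argument transfers this count to small $t$, and a direct check of $(D^2)''$ on $(t,\varepsilon)$ shows that $(D^2)'$ has no further zeros beyond $r=t$. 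Thus $D^2$ has on $[0,\varepsilon)$ only the critical points $r\in\{0,\rho^*t,t\}$, with $r=0$ and $r=t$ global minima of value $|q_t|^2$ and $r=\rho^*t$ a local maximum; in particular $q_t\in M_X$ and $0\in\overline{M_X}$.
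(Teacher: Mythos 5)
Your argument is correct in substance but follows a genuinely different, self-contained route. The paper disposes of both implications by citation: for the superquadratic case, \cite[Lemma 3.17]{BD} gives that the weak reaching radius $r'(0,0)$ vanishes, hence $r(0,0)=0$, and \cite[Theorem 4.35]{BD} converts this into $0\in\overline{M_X}$; for the non-superquadratic case ($f\equiv 0$ or $\alpha\geq 2$) it observes that $f$ extends to a $\mathcal{C}^2$ function through the origin and invokes the Nash Lemma \cite[Lemma 1.1]{D}. You instead reprove both directions by hand. Your explicit points $q_t$ on the normal at $\gamma(t)$, equidistant from the origin and from $\gamma(t)$, are a concrete witness of $M_X\ni q_t\to 0$, and their asymptotics even recover the tangent direction $\{0\}\times[0,+\infty)$ of $M_X$ obtained in Proposition \ref{tc}; the formula for $q_t$, the ratio $|q_t-\gamma(t)|/R(t)\to\alpha/2$, and the root count for $2(\alpha-1)\rho+(2-\alpha)=\alpha\rho^{\alpha-1}$ all check out. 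What the write-up leaves implicit is the uniform control of the error terms: you use $f'(t)=a\alpha t^{\alpha-1}+o(t^{\alpha-1})$ and $f''(t)=a\alpha(\alpha-1)t^{\alpha-2}+o(t^{\alpha-2})$, which do follow from definability (monotonicity theorem plus L'Hospital) but are asserted without comment, and the ``perturbation argument'' should be replaced by the simpler observation that it suffices to bound $D^2$ from below by $D^2(0)=D^2(t)$ on all of $[0,\varepsilon)$, which your sign analysis of $(D^2)'$ and $(D^2)''$ does deliver once those asymptotics are in place.

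The one step I would not let pass as written is the inequality $r(0;X)\geq\liminf_{t\to 0^+}R(t)$ in the non-superquadratic direction. A tangent ball that ``avoids $X$ locally'' gives no bound on the reaching radius of \cite[Definition 4.24]{BD}: one needs the normal segment at $\gamma(t)$ to consist of points whose distance to $X$ is realized only at $\gamma(t)$, i.e.\ the tangent ball must be disjoint from the \emph{whole} branch, and one must also treat points near the free endpoint $(0,0)$ and on the convex side of the graph. For a single convex graph with curvature bounded above this does hold --- by Blaschke's rolling argument, or equivalently by the uniform tubular neighbourhood furnished by the Nash Lemma that the paper cites --- but it is precisely the content of that lemma rather than a consequence of pointwise boundedness of $\kappa$. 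With that reference (or a short convexity argument) supplied, your proof is complete.
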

\begin{proof}
If $X$ is superquadratic at zero, then by \cite[Lemma 3.17]{BD}, the {\it weak reaching radius} (\cite[Definition 4.24]{BD}) $r'(0,0)$ is zero and so the reaching radius $r(0,0)$ is zero, too. By \cite[Theorem 4.35]{BD}, it means that $0\in \overline{M_X}$. 

If $X$ is not superquadratic at zero, then either $f\equiv 0$, or $\alpha\geq 2$, where $f$ is the function from the argument preceding the Proposition. In both cases $f$ has a $\mathcal{C}^2$ extension by 0 through zero and the Nash Lemma (\cite[Lemma 1.1]{D}) leads to the conclusion that $0\notin\overline{M_X}$.
\end{proof}

\begin{rem}\label{rk}
This result completes \cite[Theorem 3.21]{BD} where now the assumption that the germ $(X\setminus\{0\},0)$ has at least two connected components can be omitted.

Let us also note that that \cite[Theorem 4.35]{BD} together with \cite[Proposition 3.8]{BD} can also be used to simplify the proof of \cite[Theorem 3.19]{BD} (the case when the two branches $\Gamma, \Gamma'$ are superquadratic is a straightforward consequence of the two results cited).
\end{rem}

As we have an additional case in which the medial axis reaches the set $X$, we have to extend the statement of \cite[Theorem 3.27]{BD}. To this aim we prove the following assertion concerning the tangent cone\footnote{Recall that $C_a(E)=\{v\in{\Rz}^n\mid \exists X\ni x_\nu\to a, t_\nu>0\colon t_\nu(x_\nu-a)\to v\}$, for any $E\subset{\Rz}^n$ and $a\in\overline{E}$.}:
\begin{prop}\label{tc}
Assume that $X$ is as in the previous Proposition and $0\in\overline{M_X}\cap X$. Then the tangent cone $C_0(M_X)$ is the half-line perpendicular to $C_0(X)$ lying on the same side of $C_0(X)$ as $X$ near zero. To be more precise, if $X$ near zero is the graph of $f\colon [0,\varepsilon)\to{\Rz}$ and $f$ is, say, convex, then $C_0(M_X)=\{0\}\times[0,+\infty)$.
\end{prop}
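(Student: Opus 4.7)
Proposition~\ref{correction} gives that $X$ is superquadratic at $0$; writing $f(t)=at^\alpha+o(t^\alpha)$ with $a\neq 0$ and $1<\alpha<2$, I reduce by reflection to the case $a>0$, $f$ convex, so that the claim becomes $C_0(M_X)=\{0\}\times[0,+\infty)$. The plan is to establish the forward inclusion $C_0(M_X)\subseteq\{0\}\times[0,+\infty)$ by a direct asymptotic analysis of an arbitrary sequence $p_\nu\in M_X$ with $p_\nu\to 0$, and then to deduce the reverse inclusion from the fact that Proposition~\ref{correction} already supplies such a sequence.

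For the forward direction I would fix, for each $p_\nu$, two distinct nearest points $q^i_\nu=(s^i_\nu,f(s^i_\nu))\in m(p_\nu)$, $i=1,2$. Since $0\in X$ forces $d(p_\nu,X)\leq|p_\nu|$, both $|q^i_\nu|\leq 2|p_\nu|\to 0$, and for $\nu$ large each $q^i_\nu$ lies on the graph of $f$ with $s^i_\nu\in[0,\varepsilon)$. Whenever $s^i_\nu>0$, perpendicularity of $p_\nu-q^i_\nu$ to the tangent $(1,f'(s^i_\nu))$, combined with $p_\nu$ lying on the convex side of $X$, yields
\[
p_\nu=q^i_\nu+\frac{d_\nu}{\sqrt{1+f'(s^i_\nu)^2}}\bigl(-f'(s^i_\nu),\,1\bigr),\qquad d_\nu:=d(p_\nu,X);
\]
whereas $s^i_\nu=0$, after testing $|p_\nu|^2\leq|p_\nu-(t,f(t))|^2$ as $t\to 0^+$, yields $(p_\nu)_1\leq 0$.

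The central quantitative step, and the main obstacle, is to pin down $d_\nu\sim s_\nu^{2-\alpha}$, where $s_\nu:=\max(s^1_\nu,s^2_\nu)$. When both $s^i_\nu>0$, subtracting the two perpendicularity identities for $p_\nu$ and applying the mean value theorem to $f'$ produces $1\approx d_\nu f''(\xi_\nu)$ for some $\xi_\nu$ between $s^1_\nu$ and $s^2_\nu$; with $f''(t)\sim a\alpha(\alpha-1)t^{\alpha-2}$ (the hallmark of superquadraticity), this gives $d_\nu\sim s_\nu^{2-\alpha}$. When one $s^i_\nu=0$, equidistance takes the form $|p_\nu|=d_\nu$, and expanding perpendicularity at the nonzero foot yields the same relation $d_\nu\sim s_\nu^{2-\alpha}$. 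Using $f'(s^i_\nu)\sim s_\nu^{\alpha-1}$ and $f(s^i_\nu)\sim s_\nu^\alpha\ll s_\nu^{2-\alpha}$, one reads off
\[
(p_\nu)_1=s^i_\nu-\frac{d_\nu f'(s^i_\nu)}{\sqrt{1+f'(s^i_\nu)^2}}=O(s_\nu),\qquad (p_\nu)_2\sim s_\nu^{2-\alpha},
\]
so $|(p_\nu)_1|/|p_\nu|\lesssim s_\nu^{\alpha-1}\to 0$ (using $\alpha>1$) and, since $(p_\nu)_2>0$, $p_\nu/|p_\nu|\to(0,1)$.

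For the reverse inclusion, Proposition~\ref{correction} provides a sequence $p_\nu\in M_X$ with $p_\nu\to 0$; the previous step shows $p_\nu/|p_\nu|\to(0,1)$, so for every $\lambda\geq 0$ the choice $t_\nu:=\lambda/|p_\nu|$ gives $t_\nu p_\nu\to(0,\lambda)\in C_0(M_X)$, whence $\{0\}\times[0,+\infty)\subseteq C_0(M_X)$. The hard part really is the estimate $d_\nu\sim s_\nu^{2-\alpha}$ together with the cancellation producing $(p_\nu)_1=O(s_\nu)$; superquadraticity ($\alpha<2$) is precisely what forces the radius of curvature to shrink faster than the horizontal offset of the foot of perpendicular, and so drives the medial axis asymptotically vertical.
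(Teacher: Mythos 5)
Your route is genuinely different from the paper's, and it is worth recording what the paper does for comparison. The published proof is soft and splits the claim into two inequalities on the limit direction $v$: the proximality inequality $\langle c-b_\nu,v_\nu\rangle\leq\frac{1}{2d(a_\nu,X)}\|c-b_\nu\|^2$, tested at $c=0$, shows that the angle between a point $a_\nu\in M_X$ and its foot $b_\nu$ is acute, whence $\angle((1,0),v)\leq\pi/2$; then a Curve Selection Lemma argument (the definable set $Y$ of feet of points of the positive $y$-axis fills up $X$ near $0$) shows that every point of $\{x>0,\ y>f(x)\}$ near $0$ lies on a normal segment issued from the $y$-axis and therefore has a unique closest point, so $M_X$ avoids the open first quadrant and $\angle((1,0),v)\geq\pi/2$. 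Your proposal replaces both halves by an explicit asymptotic analysis of the two feet; this is a legitimate alternative and in principle more quantitative (it identifies the rate $d_\nu\asymp s_\nu^{2-\alpha}$, which the paper never needs).

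However, the step you yourself call the central one is not closed as written. From $1\approx d_\nu f''(\xi_\nu)$ with $\xi_\nu\in(s^1_\nu,s^2_\nu)$ and $f''(t)\sim a\alpha(\alpha-1)t^{\alpha-2}$ you get $d_\nu\asymp\xi_\nu^{\,2-\alpha}$, hence (since $2-\alpha>0$ and $\xi_\nu\leq s_\nu$) only the upper bound $d_\nu\lesssim s_\nu^{2-\alpha}$. The lower bound $d_\nu\gtrsim s_\nu^{2-\alpha}$ with $s_\nu=\max(s^1_\nu,s^2_\nu)$ — which is exactly what makes $(p_\nu)_2\sim d_\nu$ dominate $(p_\nu)_1=O(s_\nu)$ — does not follow, because $\xi_\nu$ may be of the order of $\min(s^1_\nu,s^2_\nu)$, which can be much smaller than $s_\nu$. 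The gap is repairable: either evaluate $(p_\nu)_1$ at the \emph{smaller} foot $\sigma_\nu=\min(s^1_\nu,s^2_\nu)$, for which the mean value theorem does give $d_\nu\gtrsim\sigma_\nu^{2-\alpha}$ and hence $|(p_\nu)_1|/d_\nu\lesssim\sigma_\nu^{\alpha-1}\to0$; or prove the two-sided bound from the exact identity $d_\nu=(s^2_\nu-s^1_\nu)/(g(s^2_\nu)-g(s^1_\nu))$ with $g=f'/\sqrt{1+(f')^2}$, using that $(1-u)/(1-u^{\alpha-1})$ stays between positive constants for $u\in[0,1)$ — but then you must also control the error term of $g(s)=a\alpha s^{\alpha-1}+o(s^{\alpha-1})$ in the difference quotient, which is delicate when $s^1_\nu$ and $s^2_\nu$ are close. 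Two further points should be made explicit: the sign in your normal displacement formula presupposes that $M_X$ near $0$ lies in the closed epigraph of $f$ (otherwise the second coordinate would be $f(s^i_\nu)-d_\nu/\sqrt{1+f'(s^i_\nu)^2}$ and the argument collapses); this is true and can be proved by projecting onto the convex epigraph, but it is an assertion, not a triviality. Likewise, differentiating $f(t)=at^\alpha+o(t^\alpha)$ twice is legitimate for definable $f$ in a polynomially bounded structure, but it deserves a justification or a reference.
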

\begin{proof}
As in \cite[Theorem 3.27]{BD} we know that $\dim_0 M_X=1$ \footnote{Indeed, $0\in\overline{M_X}\setminus M_X$ and so by  the Curve Selection Lemma, $\dim_0 M_X\geq 1$. On the other hand, $M_X$ has empty interior by the strict convexity of the norm, whence $\dim_0 M_X<2$.}. Again, we assume that $X$ is the graph of a convex definable function $f\colon [0,+\infty)\to{\Rz}$ of class $\mathcal{C}^1$ that is $\mathcal{C}^2$ on $(0,\varepsilon)$, $f(0)=f'(0)=0$ and $f$ is superquadratic at the origin (by Proposition \ref{correction}). Thanks to the convexity, for some neighbourhood $U$ of the origin, we have $M_X\cap U\subset \{(x,y)\in {\Rz}^2\mid y\geq 0\}$.

Take any sequence $M_X\ni a_\nu\to 0$ such that $a_\nu/||a_\nu||\to v$. For each index we pick a point $b_\nu\in m(a_\nu)\setminus\{0\}$. Then $b_\nu\to 0$ (cf. \cite[Lemma 8.5]{Dmfct}). Moreover, $v_\nu:=(a_\nu-b_\nu)/d(a_\nu,X)$ is a unit normal vector to $X$ at $b_\nu$ and for each $\theta\in [0,d(a_\nu,X))$, $b_\nu$ is the unique closest point in $X$ to $b_\nu+\theta v_\nu$ and so the unit vector $v_\nu$ is proximal, which implies, as in the proof of \cite[Theorem 4.35]{BD}, the inequality
$$
\forall c\in X,\> \langle c-b_\nu,v_\nu\rangle\leq \frac{1}{2d(a_\nu,X)}||c-b_\nu||^2.
$$
From this, after multiplying both sides by $d(a_\nu,X)$ and taking $c=0$, we obtain 
$$
\frac{1}{2}||b_\nu||^2\leq \langle a_\nu,b_\nu\rangle,
$$
whence $||b_\nu||/||a_\nu||\leq 2\cos\alpha_\nu$, where $\alpha_\nu=\angle(b_\nu,a_\nu)$. In particular all the angles $\alpha_\nu$ are acute.

Since $||b_\nu||\to 0$, we obtain $b_\nu/||b_\nu||\to (1,0)$, for $C_0(X)=[0,+\infty)\times\{0\}$. Our proof will be accomplished, if we show that $\alpha_\nu\to \pi/2$, since $\alpha_\nu=\angle(b_\nu/||b_\nu||,a_\nu/||a_\nu||)\to \angle ((1,0),v)$. As the angles are acute, we immediately get $\angle((1,0),v)\in [0,\pi/2]$. %Note that the unit normals $v_\nu$ converge to $(0,1)$, because $X$ is $\mathcal{C}^1$. 

We know that $X$ is superquadratic at zero, which implies that for any $y>0$, the origin does not belong to $m((0,y))$ (cf. \cite[Lemma 3.17]{BD}). If $b\in m((0,y))$, then $b$ is the unique closest point for any point from the segment $[(0,y),b]\setminus\{(0,y)\}$. As earlier, by \cite[Lemma 8.5]{Dmfct}, $b\to 0$ when $y\to 0^+$. Then the set 
$
Y:=\{b\in X\mid \exists y>0\colon b\in m((0,y))\}
$
is definable and $0\in\overline{Y}\setminus Y$. Therefore, by the Curve Selection Lemma, $Y$ coincides with $X$ in a neighbourhood of zero that we may take to be a ball $\mathbb{B}(0,R)$. 

In particular, we can find $r,\rho>0$ such that there is a continuous definable surjection $[0,r)\ni y\mapsto F(y)\in X\cap\mathbb{B}(0,\rho)$ satisfying $F(y)\in m((0,y))$. Then, for any $(x,y)\in \mathbb{B}(0,\rho/2)$ such that $x>0, y>f(x)$, the distance $d((x,y),X)$ is realized in $\mathbb{B}(0,\rho)\cap X$. If $b$ is a closest point to $(x,y)$, then the vector $(x,y)-b$ is normal to $X$ at $b$, but as $b=F(y')$ for some $y'\in [0,r)$, we conclude that $(x,y)\in [(0,y'),b]$ and so $m((x,y))=\{b\}$. Therefore, $$M_X\cap \{(x,y)\in\mathbb{B}(0,\rho/2)\mid x>0,y>0\}=\varnothing.$$

This means that $M_X\cap \mathbb{B}(0,\rho/2)\subset\{(x,y)\in{\Rz}^2\mid y\geq 0, x\leq 0\}$, whence $\angle((1,0),v)\in [\pi/2,\pi]$. Summing up, we obtain $\angle((1,0),v)=\pi/2$ as required.
\end{proof}

The correct version of Proposition \ref{correction} has also some impact on \cite[Theorem 3.27]{BD} in that we have to slightly modify its statement and include in its proof one more case. Before we state it, we need to recall a few things from \cite{BD}. 

If $(X,0)\subset{\Rz}^2$ is a definable pure one-dimensional closed germ, then $X\setminus\{0\}$ consist of finitely many branches $\Gamma_0,\dots, \Gamma_{k-1}$ ending at zero and dividing a small ball $\mathbb{B}(0,r)$ into $k$ regions. For $k>1$, if we enumerate the branches in a consecutive way, we can call these open regions $D(\Gamma_i,\Gamma_{i+1})$, $i\in \mathbb{Z}_k$. Assuming that $0\in \overline{M_X}$, we say that a pair of {\it consecutive} branches $\Gamma_i,\Gamma_{i+1}$ {\it contributes} to $M_X$ at zero, if $0\in\overline{M_X\cap D(\Gamma_i,\Gamma_{i+1})}$. 

Let $1\leq c\leq k$ be the number of contributing regions. For each such region $D(\Gamma_i,\Gamma_{i+1})$ we have two half-lines $\ell_i,\ell_{i+1}$ tangent to $\Gamma_i,\Gamma_{i+1}$ at zero, respectively. These half-lines define an oriented angle $\alpha(i,i+1)\in [0,2\pi]$, consistent with the region\footnote{Note that it may happen that $\alpha(i,i+1)=2\pi$; indeed, if $X$ consists of the two branches $\Gamma_0=[0,+\infty)\times\{0\}$ and the superquadratic $\Gamma_1=\{y=x^{3/2}, x\geq 0\}$, then both regions $D(\Gamma_0,\Gamma_1)$ and $D(\Gamma_1,\Gamma_0)$ are contributing. The angles are $0$ and $2\pi$, respectively.}. 

As we know that $M_X$ is one-dimensional, the germ $(\overline{M_X},0)$ consists of finitely many branches ending at zero. For a definable curve germ $(E,0)$, we will denote by $b_0(E)$ the number of its branches at the origin.
\begin{thm}[\cite{BD} Theorem 3.27]\label{3.27}
Assume that $0\in\overline{M_X}\cap X$ where $X$ is a pure one-dimensional closed definable set in the plane. Then,\begin{enumerate}
\item either $b_0(X)=1$, in which case $b_0(M_X)=1$ and $C_0(M_X)$ is the half-line perpendicular to $C_0(X)$ lying on the same side of $C_0(X)$ as $X$ near zero,
\item or $b_0(X)=k>1$, in which case $b_0(M_X)\leq c+1$ where $c$ is the number of contributing regions, and $C_0(M_X)$ is the union of the bisectors of all the pairs of half-lines forming up $C_0(X)$ given by pairs of consecutive branches delimiting regions that contribute to $M_X$ at zero with possibly one exception:\\ there is at most one contributing region $D(\Gamma_i,\Gamma_{i+1})$ with angle $\alpha(i,i+1)>\pi$ in which case at least one of the curves $\Gamma_i,\Gamma_{i+1}$ is superquadratic at zero and $M_{i,i+1}=M_X\cap D(\Gamma_i,\Gamma_{i+1})$ has at most two branches at zero and $C_0(M_{i,i+1})$ consists of one or two half-lines orthogonal to the corresponding tangent $\ell_i$ or $\ell_{i+1}$.
\end{enumerate}
\end{thm}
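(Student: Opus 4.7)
The plan is to split the argument according to $b_0(X)=1$ versus $b_0(X)>1$ and, in the second case, to analyze each contributing region separately, following the general scheme of the original proof of \cite[Theorem 3.27]{BD}. Case (1) is almost a direct corollary of the two preceding results: since $0\in\overline{M_X}$, Proposition \ref{correction} forces the single branch $X$ to be superquadratic at the origin, and Proposition \ref{tc} supplies the asserted description of $C_0(M_X)$. What remains is to verify $b_0(M_X)=1$. I would do this by using that $M_X$ is definable and one-dimensional near $0$ with every branch tangent to $\{0\}\times[0,+\infty)$, and that every $a\in M_X$ sufficiently close to $0$ has at least two closest points $b_1(a),b_2(a)\in X$ with first coordinates $t_1(a)<t_2(a)$, both tending to $0$ with $a$. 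The resulting definable assignment $a\mapsto(t_1,t_2)$ together with monotonicity along each branch, via the Curve Selection Lemma, forces a single branch.

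In case (2) I would work region by region. For a contributing region $D(\Gamma_i,\Gamma_{i+1})$ with $\alpha(i,i+1)\leq\pi$ the original proof of \cite[Theorem 3.27]{BD} applies unchanged and produces a single branch of $M_X$ tangent to the bisector of $\ell_i,\ell_{i+1}$. The genuinely new situation is $\alpha(i,i+1)>\pi$, which is precisely what Proposition \ref{correction} now allows: for such a region to contribute, at least one of $\Gamma_i,\Gamma_{i+1}$ must be superquadratic, since otherwise the Nash-lemma step in the proof of Proposition \ref{correction} would exclude medial axis accumulating at $0$ from that side. In that configuration the bounding branches curve into the reflex region and, running the argument of Proposition \ref{tc} locally near each superquadratic bounding branch, one sees that each such branch contributes at most one branch of $M_{i,i+1}$ with tangent orthogonal to the corresponding $\ell_j$; hence $M_{i,i+1}$ has at most two branches at $0$. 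The angle-sum identity $\sum_i \alpha(i,i+1)=2\pi$ ensures that at most one contributing region can have angle greater than $\pi$, so summing over regions yields $b_0(M_X)\leq (c-1)+2 = c+1$.

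The main obstacle is the exceptional reflex-angle region. One has to establish the dichotomy that a point $a\in M_{i,i+1}$ sufficiently close to $0$ has all its closest points lying on a single bounding branch rather than one on each; otherwise the perpendicular bisector associated with such a pair would force the tangent direction of that branch of $M_{i,i+1}$ to lie along the interior bisector of $\ell_i,\ell_{i+1}$, contradicting the geometric constraint produced by an analogue of the estimate $\tfrac12 \|b_\nu\|^2 \leq \langle a_\nu, b_\nu\rangle$ from the proof of Proposition \ref{tc}. Once this separation is established, case (1) applied to each superquadratic bounding branch inside the reflex region supplies the remaining structural information, and the bookkeeping on angles and branches gives the stated bound.
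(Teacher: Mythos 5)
Your overall architecture matches the paper's: case (1) reduces to Propositions \ref{correction} and \ref{tc}, case (2) is handled region by region with the reflex-angle region $\alpha(i,i+1)>\pi$ as the only genuinely new situation, resolved by superquadraticity of a bounding branch plus Proposition \ref{tc}, and the observations that at most one region can have angle exceeding $\pi$ and that $b_0(M_X)\le c+1$ are as in the paper.

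There is, however, a concrete gap in your proof that $b_0(M_X)=1$ in case (1) (and in the corresponding claim that $M_X$ has a single branch in each contributing region). You propose to derive it from the definable assignment $a\mapsto(t_1(a),t_2(a))$ of first coordinates of closest points and ``monotonicity along each branch''; but two distinct definable branches of $M_X$, both tangent to $\{0\}\times[0,+\infty)$, could each carry such a monotone assignment, so no contradiction is reached as stated. The paper's argument rests on a different, essential fact: for $a\in M_X$ and $b\in m(a)$, every point $c$ of the segment $[a,b]$ other than $a$ satisfies $m(c)=\{b\}$, hence the segment cannot meet $M_X$ except at $a$. If there were two branches $M_1,M_2$, with $M_1$ trapped in the region bounded by $M_2$ and $\{0\}\times[0,+\infty)$, then for $a\in M_2$ close to zero the segment $[a,b]$ with $b\in m(a)\setminus\{0\}$ would have to cross $M_1$, a contradiction; a variant of this separation argument (spelled out in the paper's footnote) handles the per-region claim in case (2). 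You do invoke essentially this uniqueness-along-segments fact later, in your discussion of the reflex region, so the ingredient is available --- it just needs to be deployed here. A smaller point: the case $\alpha(i,i+1)=\pi$ is not covered ``unchanged'' by the original proof; there the two branches merge into a single $\mathcal{C}^1$ curve, the region contributes iff that curve is superquadratic at zero, and one must first check via \cite[Lemma 3.17]{BD} that no point of the normal cone at zero realizes its distance at the origin before the Birbrair--Siersma theorem can be applied.
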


\begin{proof}
(1) is the statement of Proposition \ref{tc}. To see that $M_X$ near zero consists of one branch we consider the situation from the proof of Proposition \ref{tc}. In particular $M_X\cap\mathbb{B}(0,r)\subset\{(x,y)\in{\Rz}^2\mid y\geq 0, x\leq 0\}$. Suppose that there are (at least) two different branches $M_1,M_2$ ending at zero. Then one of them, say $M_1$, lies in the region delimited by the other one, i.e. $M_2$, and $\{0\}\times [0,+\infty)$. Take a point $a\in M_2$. Then $m(a)$ contains a non-zero point $b$. Then, if $a$ is sufficiently near zero, the segment $[a,b]$ intersects $M_1$. If $c$ belongs to the intersection, then $m(c)=\{b\}$, contrary to $c\in M_X$.

% and let $a\notin M_X$ be a point lying between them. Then it $m(a)=\{b\}$ and if $a$ is sufficiently near zero, the segment $[a,b]$ must intersect $M_1\cup M_2$, provided $b\neq 0$. Let $c$ be the point of intersection in this case. Then $b\in m(c)$ and there is another point $b'\in m(c)\setminus\{b\}$. But $||c-b||=||c-b'||$ and $||a-b'||<||a-b||$ contrary to $b\in m(a)$. 
%
%On the other hand, if $b=0$, then either $[a,0]\cap (M_1\cup M_2)\neq\varnothing$ in which case we may repeat the preceding argument, or $[a,0]$ is disjoint with $M_1\cup M_2$. If the latter occurs, $M_1,M_2$ lie on different sides of the segment $[a,0]$ near zero.   

As for (2), we can repeat the argument from the proof in \cite{BD} with only one additional case to consider. Let $D(\Gamma_0,\Gamma_1)$ be a contributing region. The same type of argument as above shows that $M_X$ has only one branch in $D(\Gamma_0,\Gamma_1)$ ending at zero\footnote{If there were only two branches of $M_X$ in $D(\Gamma_0,\Gamma_1)$ ending at zero, it could happen that along each of them the segments joining the points to the points realizing their distance would not intersect the other branch. In that case we pick a point $a$ in between the two branches of $M_X$ and the segment $[a,m(a)]$ must intersect one of the branches in a point $c$. Then $m(a)\in m(c)$ but there is a point $b\in m(c)\setminus m(a)$ and the triangle inequality shows that $||a-b||<||a-m(a)||$, which is a contradiction.}. Let  $\alpha=\alpha(0,1)\in[0,2\pi]$ be the oriented angle consistent with $D(\Gamma_0,\Gamma_1)$. 

If $\alpha \in [0,\pi)$, we proceed as in \cite[Theorem 3.27]{BD}: for $a\in M_X$ near zero, $m(a)$ cannot contain zero and has points both from $\Gamma_0$ and $\Gamma_{1}$ --- these tend to zero when $a\to 0$.  The set $M_X\cap D(\Gamma_0,\Gamma_{1})$ coincides with the conflict set of $\Gamma_0,\Gamma_{1}$ (compare the proof of Theorem 3.21 in \cite{BD}) and the Birbrair-Siersma Theorem from \cite{BS} (see also \cite[Theorem 3.26]{BD}) gives the result as in the original proof in \cite{BD}.

If $\alpha=\pi$, then $\Gamma=\Gamma_0\cup\Gamma_1$ is a $\mathcal{C}^1$ curve and $M_X\cap D(\Gamma_0,\Gamma_1)$ reaches the origin iff $\Gamma$ is superquadratic at zero \footnote{I.e. $D(\Gamma_0,\Gamma_1)$ is near zero the epigraph of a superquadratic function.}. But then no point from the normal cone at zero can have its distance realized at the origin (cf. \cite[Lemma 3.17]{BD}) and so we are in a position that allows us to repeat the argument based on the Birbrair-Siersma Theorem just as in \cite{BD}.

If $\alpha>\pi$ (clearly, there can be only one such contributing region), then the only possibility that the region $D(\Gamma_0,\Gamma_1)$ be contributing is that at least one of the two delimiting curves be superquadratic at zero and $\mathbb{B}(0,r)\setminus D(\Gamma_0,\Gamma_1)$ be non-convex. In this case we are exactly in the situation from Proposition \ref{tc} and the result follows. Of course, $M_X\cap D(\Gamma_0,\Gamma_1)$ may have two branches at zero which explains why we have $b_0(M_X)\leq c+1$.
\end{proof}
\begin{ex}
Rotate the superquadratic curve $y=x^{3/2}$, $x\geq 0$ by $\pi/6$ clockwise and the curve $y=-x^{3/2}$, $x\geq 0$ by the same angle anticlockwise, obtaining two curves $\Gamma_0,\Gamma_1$ with tangent half-lines at zero $y=(1/\sqrt{3})x, x\geq 0$ and $y=-(1/\sqrt{3})x, x\geq 0$, respectively. Let $X=\Gamma_0\cup\Gamma_1$. Then we have two contributing regions: $D(\Gamma_1,\Gamma_0)$ with $\alpha(1,0)=\pi/3$ and $D(\Gamma_0,\Gamma_1)$ with $\alpha(1,2)=5\pi/3$. The medial axes has three branches at zero: the half-line $[0,+\infty)\times\{0\}$ and two curves symmetric with respect to $(-\infty,0]\times\{0\}$, living in the quadrants $\{x\leq 0, y\geq 0\}$ and $\{x\leq 0, y\geq 0\}$, respectively. Then 
$$
C_0(M_X)=([0,+\infty)\times\{0\})\cup\{y=-\sqrt{3}x, x\leq 0\}\cup\{y=\sqrt{3}x, x\leq 0\}.
$$
\end{ex}
\begin{rem}
In our article \cite{BD} there is one misprint in Proposition 3.8 that definitely should be corrected as it makes the statement unclear. Namely the set $S$ from Proposition 3.8 should be defined as $S=B\cap\mathbb{S}(m(x_0),d(x_0))$, i.e. the sphere is centred at $m(x_0)$ (not at $x_0$ as appeared in print).
\end{rem}

\end{document}